\newtheorem{theorem}{Theorem}[section]
\newtheorem{lemma}{Lemma}[section]
\theoremstyle{definition}
\newtheorem{definition}{Definition}[section]
\theoremstyle{remark}
\theoremstyle{proposition}
\numberwithin{equation}{section}
\theoremstyle{corollary}
\newtheorem{corollary}{Corollary}[section]
\theoremstyle{conjecture}
\begin{document}

\title{On the Continuous Cohomology of\\
 a semi-direct product Lie group}
\author{Naoya Suzuki}
\date{}
\maketitle

\begin{abstract}
Let $G$ be a Lie group and $H$ be a subgroup of it. We can construct a bisimplicial manifold $NG(*) \rtimes NH(*)$ and 
the de Rham complex $\Omega^*(NG(*) \rtimes NH(*))$ on it. This complex is a triple complex and the cohomology of its
total complex is isomorphic to $H^*(B(G \rtimes H))$.
In this paper, we show that the total complex of the double complex $\Omega^q(NG(*) \rtimes NH(*))$ is 
isomorphic to the continuous cohomology $H_c^*(G \rtimes H;S^q{\mathcal G}^* \otimes S^q{\mathcal H}^*)$ for any fixed $q$.
\end{abstract}

\section{Introduction}
Let $G$ be a Lie group. In the theory of simplicial manifold, there is a well-known simplicial manifold $NG$ called nerve of $G$.
The de Rham complex $\Omega^*(NG(*))$ on it is a double complex, and the cohomology of its total complex is 
isomorphic to $H^*(BG)$. In \cite{Bot}, Bott proved the cohomology of its horizontal complex $\Omega^p(NG(*))$ is
isomorphic to the continuous cohomology $H_c^*(G;S^q{\mathcal G}^*)$ for any fixed $q$.

On the other hand, for a subgroup $H$ of $G$ we can construct a bisimplicial manifold $NG(*) \rtimes NH(*)$ and 
the de Rham complex $\Omega^*(NG(*) \rtimes NH(*))$ on it. This complex is a triple complex and the cohomology of its
total complex is isomorphic to $H^*(B(G \rtimes H))$ \cite{Suz}. 

In this paper, we show that the total complex of the double complex $\Omega^q(NG(*) \rtimes NH(*))$ is 
isomorphic to the continuous cohomology $H_c^*(G \rtimes H;S^q{\mathcal G}^*\otimes S^q{\mathcal H}^*)$ for any fixed $q$.

\section{Review of the simplicial de Rham complex}
In this section we recall the relation between the simplicial manifold $NG$ and the classifying space $BG$. We also recall
the notion of the equivariant version of the simplicial de Rham complex.

\subsection{The double complex on simplicial manifold}

For any Lie group $G$, we have simplicial manifolds $NG$, $PG$ and simplicial $G$-bundle  $\gamma : PG \rightarrow NG$
as follows:\\
\par
$NG(q)  = \overbrace{G \times \cdots \times G }^{q-times}  \ni (g_1 , \cdots , g_q ) :$  \\
face operators \enspace ${\varepsilon}_{i} : NG(q) \rightarrow NG(q-1)  $
$$
{\varepsilon}_{i}(g_1 , \cdots , g_q )=\begin{cases}
(g_2 , \cdots , g_q )  &  i=0 \\
(g_1 , \cdots ,g_i g_{i+1} , \cdots , g_q )  &  i=1 , \cdots , q-1 \\
(g_1 , \cdots , g_{q-1} )  &  i=q
\end{cases}
$$

\par
\medskip
$PG (q) = \overbrace{ G \times \cdots \times G }^{q+1 - times} \ni (\bar{g}_1 , \cdots , \bar{g}_{q+1} ) :$ \\
face operators \enspace $ \bar{\varepsilon}_{i} : PG(q) \rightarrow PG(q-1)  $ 
$$ \bar{{\varepsilon}} _{i} (\bar{g}_1 , \cdots , \bar{g}_{q+1} ) = (\bar{g}_1 , \cdots , \bar{g}_{i} , \bar{g}_{i+2}, \cdots , \bar{g}_{q+1})  \qquad i=0,1, \cdots ,q $$

\par
\medskip

We define $\gamma : PG \rightarrow NG $ as $ \gamma (\bar{g}_1 , \cdots , \bar{g}_{q+1} ) = (\bar{g}_1 {\bar{g}_2}^{-1} , \cdots , \bar{g}_{q} {\bar{g}_{q+1}}^{-1} )$.\\

For any simplicial manifold $\{ X_* \}$, we can associate a topological space $\parallel X_* \parallel $ 
called the fat realization defined as follows:
$$  \parallel X_* \parallel \enspace \buildrel \mathrm{def} \over = \coprod _{n}  {\Delta}^{n} \times X_n / \enspace ( {\varepsilon}^{i} t , x) \sim (  t , {\varepsilon}_{i} x).$$
Here ${\Delta}^{n}$ is the standard $n$-simplex and ${\varepsilon}^{i}$ is a face map of it.
It is well-known that 
$\parallel \gamma \parallel : \parallel PG \parallel \rightarrow \parallel NG \parallel$ is the universal bundle $EG \rightarrow BG$  (see \cite{Dup2} 
\cite{Mos} \cite{Seg}, for instance). \\

Now we introduce a double complex on a simplicial manifold.

\begin{definition}
For any simplicial manifold $ \{ X_* \}$ with face operators $\{ {\varepsilon}_* \}$, we have a double complex ${\Omega}^{p,q} (X) := {\Omega}^{q} (X_p) $ with derivatives as follows:
$$ \delta := \sum _{i=0} ^{p+1} (-1)^{i} {\varepsilon}_{i} ^{*}  , \qquad  d' := (-1)^{p} \times {\rm the \enspace exterior \enspace differential \enspace on \enspace }{ \Omega ^*(X_p) } .$$

\end{definition}

For $NG$ and $PG $ the following holds.

\begin{theorem}[\cite{Bot2} \cite{Dup2} \cite{Mos}]
 There exist ring isomorphisms 
$$ H^*({\Omega}^{*} (NG))  \cong  H^{*} (BG ), \qquad  H^*({\Omega}^{*} (PG)) \cong H^{*} (EG ).  $$
 Here ${\Omega}^{*} (NG)$  and  ${\Omega}^{*} (PG)$  mean the total complexes.
\end{theorem}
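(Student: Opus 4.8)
The plan is to deduce both isomorphisms from a single general statement: for an arbitrary simplicial manifold $\{X_*\}$, the total complex $\Omega^*(X)$ of the double complex computes the real cohomology $H^*(\parallel X_* \parallel ; \mathbb{R})$ of the fat realization. Granting this, the theorem follows by specializing to $X_* = NG$ and to $X_* = PG$ and invoking the identifications $\parallel NG \parallel \cong BG$ and $\parallel PG \parallel \cong EG$ recorded above. The ring structure is induced by the wedge product on the forms $\Omega^q(X_p)$, which on cohomology is matched with the cup product on $H^*(\parallel X_* \parallel )$; I would treat the multiplicativity as a separate, standard comparison of products and concentrate the main argument on the additive isomorphism.

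To prove the general statement I would use Dupont's complex $A^*(X_*)$ of compatible (simplicial) differential forms. An element of degree $n$ is a family $\varphi = \{ \varphi_p \}$ with $\varphi_p \in \Omega^n(\Delta^p \times X_p)$ subject to the compatibility relation $(\varepsilon^i \times \mathrm{id})^* \varphi_p = (\mathrm{id} \times \varepsilon_i)^* \varphi_{p-1}$ on $\Delta^{p-1} \times X_p$, the differential being the ordinary exterior derivative on $\Delta^p \times X_p$. Intuitively $A^*(X_*)$ is the de Rham complex of $\parallel X_* \parallel$, and the first step is a de Rham theorem: the integration pairing sending $\varphi$ to the singular cochain $\sigma \mapsto \int_\sigma \varphi$ on smooth simplices of the realization induces an isomorphism $H^*(A^*(X_*)) \cong H^*(\parallel X_* \parallel ; \mathbb{R})$. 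This is proved exactly as the classical de Rham theorem, the $\Delta^p$-direction being contractible so that a sheaf-theoretic, partition-of-unity argument applies over the realization.

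The central step is the fiber-integration map $\mathcal{I} : A^*(X_*) \to \Omega^*(X)$ obtained by integrating out the simplex variables,
\[ \mathcal{I}(\varphi)\big|_{X_p} = \int_{\Delta^p} \varphi_p \in \Omega^{n-p}(X_p) = \Omega^{p,\,n-p}(X). \]
A Stokes computation shows that $\mathcal{I}$ is a chain map intertwining the exterior derivative on $A^*$ with $D = \delta + d'$: the sign $(-1)^p$ in the definition of $d'$ and the alternating sum $\sum_{i} (-1)^i \varepsilon_i^*$ in $\delta$ are precisely the boundary contributions produced by Stokes' theorem on $\Delta^p$. I would then show that $\mathcal{I}$ is a quasi-isomorphism by filtering both sides by the simplicial degree $p$ and comparing the resulting spectral sequences. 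On the $X_p$-level, integration over the contractible simplex $\Delta^p$ together with the Poincar\'e lemma collapses the $\Delta^p$-direction, so that the $E_1$-term on the $A^*$ side is identified with the ordinary de Rham complex of $X_p$, matching the $E_1$-term $\Omega^q(X_p)$ on the double-complex side, and on $E_2$ both sides reduce to the simplicial cohomology of $p \mapsto H^q_{dR}(X_p)$ with coboundary $\delta$.

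The main obstacle, and the place demanding care, is exactly this comparison: one must check that fiber integration respects the two filtrations and that the induced map on $E_1$ is the de Rham isomorphism on each $X_p$, so that the Poincar\'e lemma on $\Delta^p$ genuinely kills the auxiliary simplicial direction. Once the $E_1$-map is seen to be an isomorphism, the comparison theorem for spectral sequences of filtered complexes yields that $\mathcal{I}$ is a quasi-isomorphism, and composing with the de Rham isomorphism $H^*(A^*(X_*)) \cong H^*(\parallel X_* \parallel ;\mathbb{R})$ of the first step completes the general statement; the two stated isomorphisms, and their ring structure, then follow by specialization.
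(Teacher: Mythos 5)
The paper offers no proof of this statement at all---it is quoted from the literature with citations to Bott--Shulman--Stasheff, Dupont, and Mostow--Perchick---and your proposal correctly reconstructs the standard argument of exactly those references: Dupont's simplicial de Rham theorem, proved via the complex $A^*(X_*)$ of compatible forms, the de Rham theorem for the fat realization, and the fiber-integration map $\mathcal{I}$ shown to be a quasi-isomorphism by a filtration/spectral-sequence (or explicit homotopy) comparison. Your outline is sound, including the honest flagging of the two delicate points (the $E_1$-comparison and the multiplicativity of $\mathcal{I}$, which holds only up to homotopy and needs the separate standard product comparison you mention), so it serves as a legitimate proof of the cited theorem.
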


\subsection{Equivariant version}

When a Lie group $H$ acts on a manifold $M$, there is the complex of equivariant differential forms 
${\Omega}_H ^{*} (M) := ( {\Omega} ^{*} (M) \otimes S\mathcal{H}^*)^H$ with suitable differential $d_H$ (\cite{Ber} \cite{Car}).
Here $\mathcal{H}$ is the
Lie algebra of $H$ and
$S\mathcal{H}^*$
is the algebra of polynomial functions on $\mathcal{H}$. This is
called the Cartan Model.
When $M$ is a Lie group $G$, we can define a double complex ${\Omega}^{*} _H (NG(*))$ below in the same way as in Definition 2.1.

$$
\begin{CD}
{\Omega}^{p}_H (G ) \\
@AA{-d_H}A \\
{\Omega}^{p-1}_H (G )@>{{\varepsilon}_{0} ^{*} - {\varepsilon}_{1} ^{*} +{\varepsilon}_{2} ^{*} }>>{\Omega}^{p-1}_H (NG(2))\\
@.@AA{d_H}A\\
@.{\Omega}^{p-2}_H (NG(2))\\
@.@. \ddots \\
@.@.@.{\Omega}^{1}_H (NG(p)) \\
@.@.@.@AA{(-1)^p d_H }A\\
@.@.@.{\Omega}^{0}_H (NG(p))@>{ \sum _{i=0} ^{p+1} (-1)^{i} {\varepsilon}_{i} ^{*}}>> {\Omega}^{0}_H (NG(p+1)) 
\end{CD}
$$

\section{The cohomology of the horizontal complex}

At first, we recall the description of the cohomology of groups in terms of resolutions due to Hochschild and Mostow \cite{Ho}.

\begin{theorem}[\cite{Ho}]
If $G$ is a topological group and $M$ is a topological $G$-module, then the continuous cohomology
$H_c(G;M)$ is isomorphic to the cohomology of the invariant complex
$$ {\rm Inv}_G M \rightarrow {\rm Inv}_G X_0 \rightarrow {\rm Inv}_GX_1 \rightarrow \cdots$$
for any continuously injective resolution $M \rightarrow X_0 \rightarrow X_1 \rightarrow \cdots$ of $M$.
\end{theorem}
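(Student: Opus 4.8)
The plan is to deduce this as an instance of the standard comparison theorem of (relative) homological algebra, once continuous cohomology is identified as the cohomology computed by one distinguished continuously injective resolution. First I would fix the definition of $H_c(G;M)$ as the cohomology of the invariant subcomplex of the \emph{standard} (homogeneous bar) complex $C^*(G;M)$, where $C^n(G;M)$ is the space of continuous maps $G^{n+1}\to M$ with the usual coboundary and the diagonal $G$-action. The augmentation $M\hookrightarrow C^0(G;M)$ together with the contracting homotopy $s_n(f)(g_0,\dots,g_{n-1})=f(e,g_0,\dots,g_{n-1})$ exhibits $M\to C^*(G;M)$ as an exact sequence that is continuously contractible after forgetting the $G$-action. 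Hence $M\to C^*(G;M)$ is itself a continuously injective resolution, and ${\rm Inv}_G C^*(G;M)$ is by definition the complex whose cohomology is $H_c(G;M)$. So it suffices to show that any two continuously injective resolutions of $M$ yield ${\rm Inv}_G$-complexes with the same cohomology.

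The heart of the argument is the comparison theorem adapted to the continuous and equivariant setting. Given the standard resolution $M\to C^*$ and an arbitrary continuously injective resolution $M\to X^*$, I would build continuous $G$-equivariant chain maps $f\colon C^*\to X^*$ and $g\colon X^*\to C^*$ over $\mathrm{id}_M$ by induction on degree. At each stage the map to be extended is defined on a subobject sitting inside its ambient module through a monomorphism that is \emph{continuously split} (this is guaranteed because both resolutions are continuously contractible as complexes of topological vector spaces); the defining universal property of a continuously injective module then supplies the continuous equivariant extension. The same inductive scheme, applied to a pair of lifts of a common map, produces continuous $G$-equivariant chain homotopies $gf\simeq \mathrm{id}_{C^*}$ and $fg\simeq \mathrm{id}_{X^*}$.

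Finally I would apply the invariants functor ${\rm Inv}_G=(-)^G$. Because $f$, $g$ and the two homotopies are continuous and $G$-equivariant, they restrict to honest maps and homotopies between the invariant complexes ${\rm Inv}_G C^*$ and ${\rm Inv}_G X^*$. Consequently ${\rm Inv}_G f$ and ${\rm Inv}_G g$ induce mutually inverse isomorphisms on cohomology, giving
$$H^*\bigl({\rm Inv}_G X^*\bigr)\;\cong\;H^*\bigl({\rm Inv}_G C^*\bigr)\;=\;H_c(G;M),$$
which is the assertion.

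I expect the main obstacle to be the bookkeeping that makes the lifting step genuinely continuous and equivariant at the same time: the ordinary comparison theorem uses only that $X^n$ is injective, but here one must verify that each monomorphism along which we extend is continuously split, so that the proper class of continuously contractible exact sequences is preserved, and that the extension furnished by continuous injectivity can be chosen $G$-equivariantly. Establishing that the standard complex $C^*(G;M)$ is continuously contractible and continuously injective, so that it legitimately serves as the reference resolution, is the technical crux on which the whole reduction rests.
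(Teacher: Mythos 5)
The paper does not prove this theorem at all: it is quoted from Hochschild--Mostow \cite{Ho} with a box and used as a black box, so there is no internal proof to compare with; the right benchmark is the argument of the cited source, and your reconstruction is essentially that argument and is correct in outline. You take the homogeneous complex $C^n(G;M)=C(G^{n+1},M)$ with the diagonal action, note that $s_n(f)(g_0,\dots,g_{n-1})=f(e,g_0,\dots,g_{n-1})$ satisfies $sd+ds=\mathrm{id}$ (a continuous but non-equivariant contraction --- your formula checks out), and then run the comparison theorem in the relative class of continuously split monomorphisms before applying $\mathrm{Inv}_G$. Two points deserve emphasis if you write this out in full. First, you must actually verify that each $C(G^{n+1},M)$ is continuously injective; the standard route is the untwisting isomorphism $C(G^{n+1},M)\cong C(G,C(G^n,M))$ (translation action on the first variable) together with the fact that modules of the form $C(G,V)$ are continuously injective via an explicit extension formula using the continuous retraction and evaluation --- this is where the topological hypotheses of \cite{Ho} enter, and you rightly flag it as the technical crux. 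Second, your splitting claim can be made completely explicit: if $s$ is the contracting homotopy, then $ds$ is a continuous retraction of $C^n$ onto $\operatorname{im}(d)$ because $dsd=d$, and in the inductive step the factored map $dy\mapsto df^{n-1}y$ is well defined and equivariant, and is continuous because it agrees with $df^{n-1}s$ on $\operatorname{im}(d)$; continuous injectivity of $X^n$ (extension along monomorphisms split by a continuous, not necessarily equivariant, map) then yields the equivariant extension, exactly as you say. Note also that for the arbitrary resolution $X^*$ you do not need to \emph{prove} contractibility: in \cite{Ho} the continuous splitting is built into the definition of a continuously injective resolution, so your appeal to it is legitimate by hypothesis rather than an extra obligation.
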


Now we recall the result of Bott in \cite{Bot}, which gives the cohomology of the horizontal complex of $\Omega^*(NG)$.

\begin{theorem}[Bott,\cite{Bot}]
For any fixed $q$,
$$H^{p+q}_{\delta}(\Omega^q(NG)) \cong H^{p}_c(G;S^q{\mathcal{G}^*}).$$
Here $\mathcal{G}^*$ is a ${\mathbb R}$-module of left-invariant $1$-forms on $G$.
\end{theorem}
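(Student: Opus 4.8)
The strategy is to feed Theorem 3.1 with a well-chosen resolution: I want to exhibit the horizontal complex $(\Omega^q(NG(\bullet)),\delta)$ as the complex of $G$-invariants of a continuously injective resolution of the coadjoint module $S^q\mathcal{G}^*$. Fixing $q$, the object to analyze is the cochain complex $p \mapsto \Omega^q(NG(p)) = \Omega^q(G^p)$ with $\delta = \sum_{i=0}^{p+1}(-1)^i\varepsilon_i^*$. The case $q=0$ is the template and essentially free: $\Omega^0(NG(p)) = C^\infty(G^p) = C^\infty(PG(p))^G$, and the extra degeneracy of $PG$ (insert the identity) makes $\mathbb{R} \to C^\infty(PG(0)) \to C^\infty(PG(1)) \to \cdots$ a resolution by the continuously injective, co-induced modules $C^\infty(G^{\bullet+1})$; Theorem 3.1 then yields $H^p_\delta(\Omega^0(NG)) \cong H^p_c(G;\mathbb{R})$. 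The whole problem is to install the coefficient $S^q\mathcal{G}^*$ for $q \geq 1$.

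First I would trivialize forms by the left Maurer--Cartan forms, writing $\Omega^q(G^p) \cong C^\infty(G^p)\otimes \Lambda^q\big((\mathcal{G}^*)^{\oplus p}\big)$, so that $\delta$ becomes an explicit operator combining the bar differential on $C^\infty(G^\bullet)$ with the face maps acting on the $\Lambda$-factors through the group multiplication. I would then organize this into a double complex whose second differential is the vertical (fiber) de Rham operator of the universal bundle $\gamma: PG \to NG$ equipped with its canonical connection, so that the basic forms recover $\Omega^q(NG(\bullet))$ while the fiber directions produce the coefficients. The aim is to show that passing to the relevant page of the spectral sequence collapses the complex onto $C^\infty(NG(\bullet), S^q\mathcal{G}^*)$, which by the $q=0$ analysis (now twisted by the coefficient module) computes $H^p_c(G; S^q\mathcal{G}^*)$ through Theorem 3.1; continuous injectivity is inherited since the twisted terms are again of co-induced type $C^\infty(G^{\bullet+1}, S^q\mathcal{G}^*)$.

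The hard part --- and the real content of Bott's theorem --- is the coefficient identification: proving that the module that survives is the symmetric power $S^q\mathcal{G}^*$ with the coadjoint action, rather than an exterior power, even though the forms manifestly carry $\Lambda^q$. The mechanism is that the face maps knot together the antisymmetry of the wedge product with the antisymmetry of the simplicial differential, and the acyclicity of the Weil algebra $S\mathcal{G}^*\otimes\Lambda\mathcal{G}^*$ (the Koszul relation) trades each exterior generator for a symmetric one, matching one unit of form degree to one symmetric factor of $\mathcal{G}^*$. I would first confirm this in degree $q=1$, where the $\delta$-cocycle equation for a left-invariant $1$-form on $G$ is exactly the coadjoint $1$-cocycle condition valued in $\mathcal{G}^*=S^1\mathcal{G}^*$, and then push the Weil-algebra bookkeeping through in general. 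This is also the step I expect to transcribe almost verbatim to the semidirect product, with $\mathcal{G}$ and $\mathcal{H}$ contributing the two tensor factors of $S^q\mathcal{G}^*\times S^q\mathcal{H}^*$.
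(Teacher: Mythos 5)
Your proposal follows essentially the same route as the paper's (Bott's) proof: realize the horizontal complex $\Omega^q(NG(\bullet))$ as the $G$-invariants of a continuously injective complex built from $PG$ (co-induced modules tensored with an exterior-algebra part), show that complex has cohomology $S^q\mathcal{G}^*$ concentrated in degree $q$ via the exterior-to-symmetric exchange (your Weil-algebra/Koszul mechanism is exactly what Lemma 3.1 encodes), and feed the resulting resolution into the Hochschild--Mostow theorem (Theorem 3.1). The paper packages the middle step as the levelwise product $\Omega^0(PG(\bullet)) \times \Lambda^q \Sigma \mathcal{G}^*(\bullet)$ with a K\"unneth-type computation instead of your spectral-sequence collapse onto $C^\infty(NG(\bullet), S^q\mathcal{G}^*)$, but this is an organizational variant of the same argument, not a different proof.
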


\begin{proof}
Let $\bm{n}$ denote the ordered set $\{ 0<1<\cdots<n\}$ and $\bm{n}^{\natural}$ the underlying set of it.
We define $C{\mathbb Z}(\bm{n}) :=  {\mathbb Z}(\bm{n}^{\natural})$ as a free group generated by $\bm{n}^{\natural}$
then we have a natural arrow
$$r : C{\mathbb Z} \rightarrow {\mathbb Z}$$
defined by
$$r(\bm{n}) \left( \sum_{\alpha = 0, \cdots,n} a_{\alpha} \alpha \right) = \sum a_{\alpha}, ~~~a_{\alpha} \in {\mathbb Z}.$$
Bott called the kernel of $r$ the suspension of ${\mathbb Z}$ and denote it  $\Sigma {\mathbb Z}$.

We define  the suspension of $\mathcal{G}^*$ as $\Sigma \mathcal{G}^* := C{\mathbb Z} \otimes \mathcal{G}^*$. Then there exists the following isomorphism:
$$\Omega^q(NG(n)) \cong {\rm Inv}_G[ \Omega^0(PG(n)) \otimes  \Lambda^q \Sigma \mathcal{G}^*(n) ].$$
Before we consider the cohomology of the horizontal complex $H_{\delta}^*({\rm Inv}_G[k\{ \Omega^0(PG) \times  \Lambda^q \Sigma \mathcal{G}^* \}])$,
we observe the complex $\mathfrak{P}^q_{\delta}G:=k\{\Omega^0(PG(*)) \times \Lambda^q \Sigma \mathcal{G}^*(*)\}$.
\begin{lemma}
$$H_{\delta}(\Omega^0(PG(n))) \cong \begin{cases}
{\mathbb R} & (n=0)\\
0 & {\rm otherwise}
\end{cases},~~~~~
H_{\delta}(\Lambda^q \Sigma \mathcal{G}^*(n)) \cong \begin{cases}
S^q{\mathcal G}^* & (n=q)\\
0 & {\rm otherwise},
\end{cases}$$
So
$$H_{\delta}^n({\mathfrak P} ^qG)\ \cong \begin{cases}
S^q{\mathcal G}^* & (n=q)\\
0 & {\rm otherwise}.
\end{cases}$$\\
\end{lemma}

Since the cochain complex
$${\mathfrak P} ^qG:\Omega^0(PG(0))\otimes \Lambda^q \Sigma \mathcal{G}^*(0) \xrightarrow{{\delta}_0} \Omega^0(PG(1))\otimes \Lambda^q \Sigma \mathcal{G}^*(1) \xrightarrow{{\delta}_1} \cdots$$
is continuously injective, we obtain the following continuously injective resolution of $S^q{\mathcal G}^*$ from Lemma 3.1.
$$ S^q{\mathcal G}^*(={\rm Ker}{\delta}_q/{\rm Im}{\delta}_{q-1}) \xrightarrow{\delta_q} (\Omega^0(PG(q+1))\otimes \Lambda^q \Sigma \mathcal{G}^*(q+1))/{\rm Im}{\delta}_{q}  $$
$$ \xrightarrow{{\delta}_{q+1}} \Omega^0(PG(q+2))\otimes \Lambda^q \Sigma \mathcal{G}^*(q+2) \xrightarrow{{\delta}_{q+2}} \cdots~~~~~({\rm exact}).$$

Therefore  $H^{p}_c(G;S^q{\mathcal{G}^*})$ is equal to the $p$-th cohomology of the complex below.
$$   {\rm Inv}_G S^q {\mathcal{G}^*} \xrightarrow{\delta_q} {\rm Inv}_G [\Omega^0(PG(q+1))\otimes \Lambda^q \Sigma \mathcal{G}^*(q+1)/{\rm Im}{\delta}_{q}]$$
$$ \xrightarrow{{\delta}_{q+1}} {\rm Inv}_G [\Omega^0(PG(q+2))\otimes \Lambda^q \Sigma \mathcal{G}^*(q+2)] \xrightarrow{{\delta}_{q+2}}\cdots$$
So we obtain the following isomorphism.
$$ H^{p}_c(G;S^q{\mathcal{G}^*}) \cong H^{p+q}_{\delta}({\rm Inv}_G [k \{ \Omega^0(PG)\times \Lambda^q \Sigma \mathcal{G}^* \}]).$$
\end{proof}

\begin{corollary}[Bott,\cite{Bot}]
If $G$ is compact,
$$H^{p}_{\delta}(\Omega^q(NG)) \cong \begin{cases}
{\rm Inv}_GS^q\mathcal{G}^* & (p=q)\\
0 & {\rm otherwise.}
\end{cases}$$
\end{corollary}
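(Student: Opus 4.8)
The plan is to deduce the corollary directly from Theorem 3.2, which has just been established. Writing the corollary's cohomological degree as $p$ and applying Theorem 3.2 with its parameter $p$ replaced by $p-q$, we obtain $H^p_{\delta}(\Omega^q(NG)) \cong H^{p-q}_c(G;S^q\mathcal{G}^*)$ for every $p$. Thus the entire statement reduces to computing the continuous cohomology $H^{\bullet}_c(G;S^q\mathcal{G}^*)$ under the additional hypothesis that $G$ is compact.

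For a compact Lie group $G$ the key point is that the invariants functor ${\rm Inv}_G$ is exact. Indeed, since $S^q\mathcal{G}^*$ is a finite-dimensional real $G$-module and $G$ carries a normalized bi-invariant Haar measure, averaging over $G$ defines a continuous $G$-equivariant projection onto the invariant subspace, so ${\rm Inv}_G M$ is a direct summand of $M$ in a way compatible with $G$-maps. Consequently ${\rm Inv}_G$ takes exact sequences of topological $G$-modules to exact sequences. By Theorem 3.1, $H^p_c(G;S^q\mathcal{G}^*)$ is the cohomology of the complex obtained by applying ${\rm Inv}_G$ to any continuously injective resolution $S^q\mathcal{G}^* \to X_0 \to X_1 \to \cdots$; for instance the resolution produced in the proof of Theorem 3.2 may be used. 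Since ${\rm Inv}_G$ is exact, the resulting complex ${\rm Inv}_G S^q\mathcal{G}^* \to {\rm Inv}_G X_0 \to {\rm Inv}_G X_1 \to \cdots$ is still exact, which forces its cohomology to be concentrated in degree zero with value ${\rm Inv}_G S^q\mathcal{G}^*$. Hence $H^0_c(G;S^q\mathcal{G}^*) = {\rm Inv}_G S^q\mathcal{G}^*$ and $H^p_c(G;S^q\mathcal{G}^*)=0$ for $p>0$.

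Combining the two steps, $H^{p-q}_c(G;S^q\mathcal{G}^*)$ is nonzero exactly when $p-q=0$, in which case it equals ${\rm Inv}_G S^q\mathcal{G}^*$, and it vanishes otherwise; substituting this into the reindexed isomorphism yields precisely the asserted formula. The only substantive ingredient is the positive-degree vanishing of continuous cohomology, and I expect this to present no real obstacle: because the coefficient module is finite-dimensional and $G$ is compact, the Haar-averaging argument that makes ${\rm Inv}_G$ exact is entirely routine.
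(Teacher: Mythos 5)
The paper itself offers no proof of this corollary --- it is quoted from Bott and stamped with a $\Box$ --- so there is no argument of the paper to compare against; your plan of deducing it from Theorem 3.2 together with the acyclicity of continuous cohomology of compact groups is certainly the intended route, and its core is sound. There are, however, two points where the write-up as given has real gaps.

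First, your opening step asserts the reindexed isomorphism $H^p_{\delta}(\Omega^q(NG)) \cong H^{p-q}_c(G;S^q\mathcal{G}^*)$ ``for every $p$,'' but Theorem 3.2 only produces this for $p-q \geq 0$. For $p<q$ the claimed vanishing of $H^p_{\delta}(\Omega^q(NG))$ is not covered by any literal reading of the theorem, and declaring negative-degree continuous cohomology to be zero by convention does not substitute for an argument about $H^p_{\delta}$ itself. Fortunately your own key tool closes this hole: by Lemma 3.1 the complex $\mathfrak{P}^qG = k\{\Omega^0(PG)\times\Lambda^q\Sigma\mathcal{G}^*\}$ is exact in degrees $<q$, and since ${\rm Inv}_G$ is exact when $G$ is compact, the complex $\Omega^q(NG(*)) = {\rm Inv}_G[\mathfrak{P}^qG]$ is exact in degrees $<q$ as well, which is exactly the missing vanishing. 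Second, your justification of the exactness of ${\rm Inv}_G$ leans on the finite-dimensionality of $S^q\mathcal{G}^*$, but the modules to which exactness must actually be applied are the terms of the resolution --- infinite-dimensional function spaces such as $\Omega^0(PG(q+1))\times\Lambda^q\Sigma\mathcal{G}^*(q+1)/{\rm Im}\,\delta_q$. The Haar-averaging projection onto invariants still exists there, but the reason is that these are quasi-complete (Fr\'echet-type) topological vector spaces in which the vector-valued integral $\int_G g\cdot m\,dg$ converges, not finite-dimensionality. With these two repairs --- both routine, and both using only material already in Section 3 --- your deduction is correct.
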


\bigskip

\section{The triple complex on bisimplicial manifold}
In this section we construct a triple complex on a bisimplicial manifold.\\

A bisimplicial manifold is a sequence of manifolds with horizontal and vertical face and degeneracy operators which commute with each other.
A bisimplicial map is a sequence of maps commuting with horizontal and vertical face and degeneracy operators.
For a subgroup $H$ of $G$, we define a bisimplicial manifold $NG(*) \rtimes NH(*)$ as follows;
\par
$$NG(p) \rtimes NH(q)  := \overbrace{G \times \cdots \times G }^{p-times} \times \overbrace{H \times \cdots \times H }^{q-times}. $$  
Horizontal face operators \enspace ${\varepsilon}_{i}^{G} : NG(p) \rtimes NH(q) \rightarrow NG(p-1)  \rtimes NH(q) $ are the same as the face operators of $NG(p)$.
Vertical face operators \enspace ${\varepsilon}_{i}^{H} : NG(p) \rtimes NH(q) \rightarrow NG(p)  \rtimes NH(q-1) $ are
$$
{\varepsilon}_{i}^{H}(\vec{g}, h_1 , \cdots , h_q )=\begin{cases}
(\vec{g}, h_2 , \cdots , h_q )  &  i=0 \\
(\vec{g}, h_1 , \cdots ,h_i h_{i+1} , \cdots , h_q )  &  i=1 , \cdots , q-1 \\
(h_{q}\vec{g}h_{q} ^{-1}, h_1 , \cdots , h_{q-1} )  &  i=q.
\end{cases}
$$
Here $\vec{g}=(g_1, \cdots , g_p)$.

We define a bisimplicial map $\gamma_{\rtimes} : P{G}(p) \times P{H}(q) \rightarrow NG(p) \rtimes NH(q) $ as $ \gamma_{\rtimes} (\vec{\bar{g}}, \bar{h}_1, \cdots ,\bar{h}_{q+1} ) = (\bar{h}_{q+1}\gamma (\vec{\bar{g}}) \bar{h}^{-1} _{q+1} ,
 \gamma (\bar{h}_1, \cdots, \bar{h}_{q+1}))$.
Now we fix a semi-direct product operator $\cdot_{\rtimes}$of $G \rtimes H$ as $(g, h) \cdot_{\rtimes} (g', h') := (ghg'h^{-1} , hh')$,
then
$G \rtimes H$ acts $ PG(p) \times P{H}(q)$ by right as $(\vec{\bar{g}},\vec{\bar{h}})\cdot(g,h) = (h^{-1}\vec{\bar{g}}gh, \vec{\bar{h}}h)$
and $\parallel  \gamma_{\rtimes} \parallel$ is a model of $E(G \rtimes H) \rightarrow B(G \rtimes H)$.

\begin{definition}
For a bisimplicial manifold $NG(*) \rtimes NH(*)$, we have a triple complex as follows:

$${\Omega}^{p,q,r} (NG(*) \rtimes NH(*)) := {\Omega}^{r} (NG(p) \rtimes NH(q)) $$

Derivatives are:
$$ \delta_G := \sum _{i=0} ^{p+1} (-1)^{i} ({{\varepsilon}^G _{i}}) ^{*}  , \qquad  \delta_H := \sum _{i=0} ^{q+1} (-1)^{i} ({{\varepsilon}^H _{i}}) ^{*} \times (-1)^{p} $$
$$ d' :=  (-1)^{p+q} \times {\rm the \enspace exterior \enspace differential \enspace on \enspace }{ \Omega ^*(NG(p) \rtimes NH(q)) }.$$

\end{definition}

\begin{theorem}[\cite{Suz}]
 If $H$ is compact, there exist isomorphisms
$$ H({\Omega}_H ^{*} (NG))  \cong H({\Omega}^{*} (NG \rtimes NH)) \cong  H^{*} (B(G \rtimes H)).$$
Here ${\Omega}_H ^{*} (NG)$  means the total complex in subsection 2.2 and ${\Omega}^{*} (NG \rtimes NH)$ means the 
total complex of the triple complex.
\end{theorem}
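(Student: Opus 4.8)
The plan is to prove the two isomorphisms separately, treating the right-hand one as a formal consequence of simplicial de Rham theory and putting the real work into the left-hand one.

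For $H(\Omega^*(NG \rtimes NH)) \cong H^*(B(G \rtimes H))$ I would use the bisimplicial analogue of Theorem 2.1. The triple complex $\Omega^{p,q,r}(NG \rtimes NH)$ is the de Rham complex of the bisimplicial manifold $NG \rtimes NH$, and since $\parallel \gamma_{\rtimes} \parallel$ models $E(G \rtimes H) \to B(G \rtimes H)$, the fat realization of $NG \rtimes NH$ is $B(G \rtimes H)$. Iterating Dupont's integration-over-the-simplex map in the two simplicial directions (equivalently, applying the simplicial de Rham theorem to the diagonal simplicial manifold) then yields a quasi-isomorphism from the total complex to the de Rham cochains of $B(G \rtimes H)$.

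The substantive claim is $H(\Omega_H^*(NG)) \cong H(\Omega^*(NG \rtimes NH))$, which I would prove by comparing the two total complexes filtered by the $G$-nerve degree $p$. For fixed $p$, the simplicial manifold $q \mapsto NG(p) \rtimes NH(q) = NG(p) \times H^q$ with the vertical faces $\varepsilon_i^H$ is exactly the nerve of the action groupoid for the conjugation action of $H$ on $NG(p)$; hence the double subcomplex $(\Omega^r(NG(p) \rtimes NH(q)), \delta_H, d')$ is the simplicial de Rham complex of that nerve, with total cohomology the equivariant cohomology $H_H^*(NG(p)) = H^*(EH \times_H NG(p))$. Because $H$ is compact, the Cartan model $(\Omega_H^*(NG(p)), d_H)$ computes the same group, and I would realise the comparison explicitly and naturally in $p$ via Dupont's forms on $NH$: sending the generators of $S\mathcal{H}^*$ to the curvature of the canonical simplicial connection on $\gamma:PH \to NH$ and extending $H$-invariantly produces an invariant chain map from the Cartan fiber to the simplicial fiber.

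Filtering both total complexes by $p$ and comparing spectral sequences, this fiberwise comparison identifies the $E_1$-pages, $E_1^{p,\bullet} \cong H_H^*(NG(p))$ on each side, and the induced $d_1$ is in both cases $\delta_G = \delta = \sum_{i=0}^{p+1}(-1)^i(\varepsilon_i^G)^*$, because $\delta_G$ only moves the $NG$-coordinates and commutes with the $H$-machinery. Both spectral sequences are first-quadrant in this filtration, so a filtered chain map that is an isomorphism on $E_1$ is an isomorphism on the abutment, giving the stated isomorphism; as a consistency check, $B(G \rtimes H) \simeq EH \times_H BG$ via the fibration $BG \to B(G \rtimes H) \to BH$, so both sides independently equal $H_H^*(BG)$. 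The main obstacle is precisely the fiberwise Cartan-versus-simplicial comparison and its naturality in $p$: compactness of $H$ enters here, guaranteeing that the Cartan model computes $H_H^*$ and, through averaging over $H$, furnishing the invariant chain map together with a compatible cochain homotopy. Once the $p=0$ comparison is set up $H$-equivariantly for the conjugation action, its compatibility with $\delta_G$ is formal.
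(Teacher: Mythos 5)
This theorem is imported into the paper from \cite{Suz} and stated there without proof, so there is no in-paper argument to compare against; judged on its own merits, your proposal is correct and follows what is essentially the standard (and, as far as the published proof in \cite{Suz} goes, essentially the same) route. The right-hand isomorphism via the bisimplicial de Rham theorem applied to $\parallel NG(*) \rtimes NH(*) \parallel \simeq B(G \rtimes H)$, using that $\parallel \gamma_{\rtimes} \parallel$ models the universal bundle, is exactly how this is done, and you correctly note it needs no compactness. For the left-hand isomorphism, your identification of the fixed-$p$ vertical double complex $(\Omega^r(NG(p)\rtimes NH(q)), \delta_H, d')$ as the simplicial de Rham complex of the nerve of the conjugation action groupoid, hence computing $H^*(EH \times_H NG(p))$, is correct (the face operator $\varepsilon_q^H$ is precisely the action face), and the filtration-by-$p$ comparison is the right formal framework: both filtrations are bounded in each total degree, so an $E_1$-isomorphism compatible with $d_1 = \delta_G$ gives the result. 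The one step you should cite rather than sketch is the fiberwise comparison of the Cartan model $\Omega_H^*(NG(p))$ with the simplicial model: this is the equivariant de Rham theorem for compact groups, available in the literature already referenced in this paper (\cite{Car}, and in simplicial form \cite{Get}), and its naturality in the $H$-manifold is what yields the compatibility with $\delta_G$, since the maps $\varepsilon_i^G$ are equivariant for the conjugation action. You correctly located compactness of $H$ as entering only at that point.
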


\section{Main theorem}

\begin{theorem}
For any fixed $q$, 
$$H^{p+q}_{\delta}(\Omega^q(NG \rtimes NH)) \cong H^{p}_c(G \rtimes H ;S^q{\mathcal{G}^*} \otimes S^q{\mathcal{H}^*}).$$
Here $\delta:=\delta_G+\delta_H$.
\end{theorem}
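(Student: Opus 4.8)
The plan is to mimic Bott's argument from Theorem 3.2, but now over the bisimplicial manifold $NG(*)\rtimes NH(*)$, treating the two simplicial directions together. First I would produce a suitable resolution realizing $\Omega^q(NG(p)\rtimes NH(q))$ as invariants of a free module. Concretely, I expect an isomorphism of the form
\begin{equation}
\Omega^r(NG(p)\rtimes NH(q)) \cong {\rm Inv}_{G\rtimes H}\bigl[\Omega^0(PG(p)\times PH(q)) \times \Lambda^r(\Sigma\mathcal{G}^*(p)\oplus\Sigma\mathcal{H}^*(q))\bigr],
\end{equation}
using the bisimplicial map $\gamma_{\rtimes}: PG(p)\times PH(q)\to NG(p)\rtimes NH(q)$ and the fact that $\parallel\gamma_{\rtimes}\parallel$ is a model of $E(G\rtimes H)\to B(G\rtimes H)$. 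Fixing the fibre degree $r=q$ (the tangent-cotangent degree), I would then set up the analogue of Bott's complex $\mathfrak{P}^qG$, namely a complex $\mathfrak{P}^q(G\rtimes H):=k\{\Omega^0(PG\times PH)\times\Lambda^q(\Sigma\mathcal{G}^*\oplus\Sigma\mathcal{H}^*)\}$ whose differential is $\delta=\delta_G+\delta_H$.

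The key computational step is the bisimplicial analogue of Lemma 3.1: I would compute the $\delta$-cohomology of each factor. For the path-space factor, both $\delta_G$ and $\delta_H$ acting on $\Omega^0(PG(*)\times PH(*))$ should give a resolution that is acyclic except in total degree $0$, where it contributes $\mathbb{R}$; this follows from the contractibility of each path space $PG$, $PH$ and a Künneth/Eilenberg--Zilber argument for the product double complex. For the exterior-algebra factor, the suspension $\Sigma\mathcal{G}^*(*)$ contributes $S^q\mathcal{G}^*$ concentrated in simplicial degree $q$ under $\delta_G$, and independently $\Sigma\mathcal{H}^*(*)$ contributes $S^q\mathcal{H}^*$ in degree $q$ under $\delta_H$. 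The main obstacle I anticipate is organizing the \emph{two} suspension directions so that $\Lambda^q(\Sigma\mathcal{G}^*\oplus\Sigma\mathcal{H}^*)$ has cohomology $S^q\mathcal{G}^*\times S^q\mathcal{H}^*$ concentrated in the correct bidegree: since $\Lambda^q$ of a direct sum decomposes as $\bigoplus_{a+b=q}\Lambda^a\Sigma\mathcal{G}^*\otimes\Lambda^b\Sigma\mathcal{H}^*$, I must check that the cross terms ($a,b>0$) either vanish in cohomology or assemble exactly into the claimed product, and verify that the twisting in ${\varepsilon}^H_q$ (the conjugation $h_q\vec{g}h_q^{-1}$) does not disturb this splitting.

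With Lemma 3.1's analogue in hand, I would argue that the combined complex $\mathfrak{P}^q(G\rtimes H)$ is continuously injective and has cohomology $S^q\mathcal{G}^*\times S^q\mathcal{H}^*$ concentrated in total simplicial degree $q$. Truncating as in Bott's proof then yields a continuously injective resolution of the $(G\rtimes H)$-module $S^q\mathcal{G}^*\times S^q\mathcal{H}^*$. Finally, applying Theorem 3.1 (Hochschild--Mostow) to the invariant complex ${\rm Inv}_{G\rtimes H}\,\mathfrak{P}^q(G\rtimes H)$ identifies its cohomology with $H^*_c(G\rtimes H;S^q\mathcal{G}^*\times S^q\mathcal{H}^*)$, and the degree bookkeeping (the total simplicial degree shifts by $q$ relative to the continuous cohomology degree, exactly as in the $p+q$ versus $p$ shift of Bott's statement) gives the stated isomorphism
\begin{equation}
H^{p+q}_{\delta}(\Omega^q(NG\rtimes NH)) \cong H^{p}_c(G\rtimes H;S^q\mathcal{G}^*\times S^q\mathcal{H}^*).
\end{equation}
The step I expect to require the most care is verifying that passing to $(G\rtimes H)$-invariants commutes with taking the cohomology in the resolution, i.e. that the free-module description survives the twisted action $(\vec{\bar{g}},\vec{\bar{h}})\cdot(g,h)=(h^{-1}\vec{\bar{g}}gh,\vec{\bar{h}}h)$ coherently in both simplicial directions.
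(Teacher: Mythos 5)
Your scaffolding---realize $\Omega^q$ as $(G\rtimes H)$-invariants of a free complex built from path spaces and suspensions, check continuous injectivity, truncate, and apply Hochschild--Mostow (Theorem 3.1)---is the same as the paper's. But the proofs part ways at the very first identification, and the step you flag as ``the main obstacle'' is a genuine gap that cannot be closed in the way you hope. The paper never takes $\Lambda^q$ of the direct sum of suspensions: it works with the product of two \emph{separate} Bott complexes,
$$\mathfrak{P}^q_{\delta_G}G \times \mathfrak{P}^q_{\delta_H}H=k\{\Omega^0(PG(*)) \times \Lambda^q \Sigma \mathcal{G}^*(*)\} \times k\{\Omega^0(PH(*)) \times \Lambda^q \Sigma \mathcal{H}^*(*)\},$$
in which each factor carries its own full exterior degree $q$ and no mixed exterior degrees occur at all; Lemma 3.1 applied to each factor then gives cohomology $S^q\mathcal{G}^*\times S^q\mathcal{H}^*$ concentrated in simplicial degree $q$ immediately, and the rest is truncation plus Theorem 3.1, exactly as you outline.

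In your complex the cross terms do \emph{not} go away. For $a+b=q$ the summand $k\{\Omega^0(PG\times PH)\otimes\Lambda^a\Sigma\mathcal{G}^*\otimes\Lambda^b\Sigma\mathcal{H}^*\}$ is (up to completion) the tensor product of the weight-$a$ Bott complex for $G$ with the weight-$b$ Bott complex for $H$, so by Lemma 3.1 and K\"unneth its $\delta$-cohomology is $S^a\mathcal{G}^*\otimes S^b\mathcal{H}^*$ concentrated in total degree $a+b=q$; none of these pieces is acyclic. Hence the cohomology of your $\mathfrak{P}^q(G\rtimes H)$ is $\bigoplus_{a+b=q}S^a\mathcal{G}^*\otimes S^b\mathcal{H}^*\cong S^q(\mathcal{G}\oplus\mathcal{H})^*$ in degree $q$, and this is genuinely larger than $S^q\mathcal{G}^*\times S^q\mathcal{H}^*$: for $G=H=U(1)$ and $q=2$ the former is $3$-dimensional while the latter is $2$-dimensional. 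So carried to completion, your argument proves an isomorphism with $H^p_c(G\rtimes H;S^q(\mathcal{G}\oplus\mathcal{H})^*)$ rather than with the coefficient module in the statement. The discrepancy originates in the identification: your formula ${\rm Inv}_{G\rtimes H}[\Omega^0(PG(n)\times PH(m))\otimes\Lambda^q(\Sigma\mathcal{G}^*(n)\oplus\Sigma\mathcal{H}^*(m))]$ describes \emph{all} $q$-forms on $NG(n)\rtimes NH(m)$, whereas the complex the paper actually computes with retains only the pure-type part (exterior bidegrees $(q,0)$ and $(0,q)$), which is how it avoids cross terms by fiat. To produce a correct write-up you must either adopt the paper's complex and justify its identification with $\Omega^q(NG\rtimes NH)$, or keep your (geometrically natural) identification and accept the coefficient module $S^q(\mathcal{G}\oplus\mathcal{H})^*$; as proposed, the unresolved flagged step is exactly where the proof breaks.
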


\begin{proof}

We identify $\Omega^q(NG(n) \rtimes NH(m))$ with ${\rm Inv}_{G \rtimes H} [ \Omega^0(PG(n)) \otimes  \Lambda^q \Sigma \mathcal{G}^*(n) \otimes \Omega^0(PH(m)) \otimes  \Lambda^q \Sigma \mathcal{G}^*(m)  ]$.

Before we deal with the cohomology $H_{\delta}^*({\rm Inv}_{G \rtimes H}[k\{ \Omega^0(PG) \times  \Lambda^q \Sigma \mathcal{G}^* \times \Omega^0(PH) \times  \Lambda^q \Sigma \mathcal{G}^* \}])$,
we observe the total complex of the double complex 
$$\mathfrak{P}^q_{\delta_G}G \otimes \mathfrak{P}^q_{\delta_H}H=k\{\Omega^0(PG(*)) \times \Lambda^q \Sigma \mathcal{G}^*(*)\} \otimes k\{\Omega^0(PH(*)) \times \Lambda^q \Sigma \mathcal{H}^*(*)\}.$$

From Lemma 3.1, we obtain:

$$H_{\delta}^n({\mathfrak P} ^qG \otimes \mathfrak{P}^q H)\ \cong \begin{cases}
S^q{\mathcal G}^* \otimes S^q{\mathcal H}^*& (n=q)\\
0 & {\rm otherwise}.
\end{cases}$$\\

Since the total complex 
$$k_{\delta}({\mathfrak P} ^qG \times \mathfrak{P}^q H)(0) \xrightarrow{{\delta}_0}  k_{\delta}({\mathfrak P} ^qG \times \mathfrak{P}^q H)(1) \xrightarrow{{\delta}_1} \cdots$$
is continuously injective, we obtain the following continuously injective resolution of $S^q{\mathcal G}^* \otimes S^q{\mathcal H}^*$.
$$ S^q{\mathcal G}^* \otimes S^q{\mathcal H}^* (={\rm Ker}{\delta}_q/{\rm Im}{\delta}_{q-1})\xrightarrow{{\delta}_{q}}
k_{\delta}({\mathfrak P} ^qG \times \mathfrak{P}^q H)(q+1)/{\rm Im}{\delta}_{q}$$
$$\xrightarrow{{\delta}_{q+1}} k_{\delta}({\mathfrak P} ^qG \times \mathfrak{P}^q H)(q+2)\xrightarrow{{\delta}_{q+2}} \cdots ({\rm exact}).$$

Therefore  $H^{p}_c(G \rtimes H;S^q{\mathcal{G}^*} \otimes S^q{\mathcal{H}^*})$ is equal to the $p$-th cohomology of the complex below.
$$ {\rm Inv}_{G \rtimes H} (S^q{\mathcal{G}^*} \otimes S^q{\mathcal{H}^*}) \xrightarrow{{\delta}_{q}}  {\rm Inv}_{G \rtimes H} [k_{\delta}({\mathfrak P} ^qG \times \mathfrak{P}^q H)(q+1)/{\rm Im}{\delta}_{q}]$$
$$ \xrightarrow{{\delta}_{q+1}} {\rm Inv}_{G \rtimes H} [k_{\delta}({\mathfrak P} ^qG \times \mathfrak{P}^q H)(q+2)] \xrightarrow{{\delta}_{q+2}} \cdots$$
So we obtain the following isomorphisms.
$$ H^{p}_c(G \rtimes H ;S^q{\mathcal{G}^*} \otimes S^q{\mathcal{H}^*}) \cong H^{p+q}_{\delta}({\rm Inv}_{ G \rtimes H} [k_{\delta}({\mathfrak P} ^qG \times \mathfrak{P}^q H)]) $$
$$\cong H^{p+q}_{\delta}(\Omega^q(NG \rtimes NH)).$$
\end{proof}

\begin{corollary}
If $G$ is compact,
$$H^{p}_{\delta}(\Omega^q(NG \rtimes NG)) \cong \begin{cases}
{\rm Inv}_{G \rtimes G}(S^q\mathcal{G}^* \otimes S^q\mathcal{G}^*) & (p=q)\\
0 & {\rm otherwise.}
\end{cases}$$
\end{corollary}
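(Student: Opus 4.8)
The plan is to obtain the corollary as an immediate specialization of the Main Theorem (Theorem 5.1), in exactly the way Corollary 3.1 is deduced from Bott's Theorem 3.2. Setting $H = G$ in Theorem 5.1 gives, for each fixed $q$,
$$H^{p+q}_{\delta}(\Omega^q(NG \rtimes NG)) \cong H^{p}_c(G \rtimes G; S^q{\mathcal G}^* \times S^q{\mathcal G}^*).$$
Relabelling the total degree so that it reads $p$ on the left, this becomes $H^{p}_{\delta}(\Omega^q(NG \rtimes NG)) \cong H^{p-q}_c(G \rtimes G; S^q{\mathcal G}^* \times S^q{\mathcal G}^*)$. In particular the left-hand side already vanishes when $p < q$, since the continuous cohomology then lies in negative degree.

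Next I would record that compactness of $G$ forces $G \rtimes G$ to be compact: as a space it is $G \times G$, and the law $(g,h)\cdot_{\rtimes}(g',h') = (ghg'h^{-1}, hh')$ is continuous, so this semi-direct product of two compact groups is again compact. The coefficient module $S^q{\mathcal G}^* \times S^q{\mathcal G}^*$ is a finite-dimensional real representation of $G \rtimes G$, the action being the one induced by the right action of $G \rtimes G$ on $PG \times PG$ described in Section 4.

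The key step is then the standard vanishing of the continuous cohomology of a compact group: for a compact group $K$ acting continuously on a finite-dimensional real vector space $M$, one has $H^a_c(K; M) = 0$ for $a \geq 1$ and $H^0_c(K; M) = {\rm Inv}_K M$, the contracting homotopy in positive degrees being produced by integration against the normalized Haar measure of $K$. By Theorem 3.1 this value of $H_c$ is independent of the chosen resolution, so it applies to the resolution built in the proof of Theorem 5.1. Feeding this into the displayed isomorphism with $K = G \rtimes G$ and $a = p-q$, we find that $H^{p-q}_c(G \rtimes G; S^q{\mathcal G}^* \times S^q{\mathcal G}^*)$ equals ${\rm Inv}_{G \rtimes G}(S^q{\mathcal G}^* \times S^q{\mathcal G}^*)$ when $p = q$ and vanishes otherwise, which is precisely the assertion.

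The only real content beyond Theorem 5.1 is this averaging argument, and the one point deserving care is that the coefficient module is genuinely of the type for which Haar integration yields a contracting homotopy, i.e. a finite-dimensional real $G \rtimes G$-module; granting that, the remainder is pure degree bookkeeping and I anticipate no further obstacle.
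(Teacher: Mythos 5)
Your proposal is correct and is precisely the argument the paper intends: the corollary is stated with no written proof, being the immediate specialization of Theorem 5.1 to $H=G$ combined with the standard fact that the compact group $G\rtimes G$ has vanishing higher continuous cohomology with finite-dimensional coefficients (Haar averaging) and invariants in degree zero --- exactly the way Corollary 3.1 follows from Theorem 3.2. Your degree bookkeeping, the compactness of $G\rtimes G$, and the finite-dimensionality of $S^q\mathcal{G}^*\times S^q\mathcal{G}^*$ are all the right points to check, and they check out.
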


\end{document}